                     \def\version{April 6, 2011}                   %
\def\@rmrk#1#2{\refstepcounter
    {#1}\@ifnextchar[{\@yrmrk{#1}{#2}}{\@xrmrk{#1}{#2}}}
\newcommand{\smfrac}[2]{{\textstyle{\frac {#1}{#2}}}}
\makeatletter\@addtoreset{equation}{section}\makeatother
 \newfont{\bfit}{cmbxti10 scaled 1200}
\renewcommand{\d}{{\rm d}}
 \newcommand{\e}{{\rm e} }
 \newcommand{\eps}{\varepsilon}
 \newcommand{\supp}{{\rm supp}}
 \newcommand{\R}{\mathbb{R}}
 \newcommand{\N}{\mathbb{N}}
 \newcommand{\Z}{\mathbb{Z}}
 \newcommand{\me}{\mathbb{E}}
 \newcommand{\E}{\mathbb{E}}
 \newcommand{\om}{\omega}
 \renewcommand{\P}{\mathbb{P}}
 \def\1{{\mathchoice {1\mskip-4mu\mathrm l} 
{1\mskip-4mu\mathrm l}
{1\mskip-4.5mu\mathrm l} {1\mskip-5mu\mathrm l}}}
 \newcommand{\Fcal}{{\mathcal F}}
 \newcommand{\Mcal}{{\mathcal M}}
 \newcommand{\xy}{{xy}}
\newcommand{\ssup}[1] {{\scriptscriptstyle{({#1}})}}
\renewcommand{\subsection}{\secdef \subsct\sbsect}
\newcommand{\subsct}[2][default]{\refstepcounter{subsection}
\vspace{0.15cm}
{\flushleft\bf \arabic{section}.\arabic{subsection}~\bf #1  }
\nopagebreak\nopagebreak}
\newcommand{\sbsect}[1]{\vspace{0.1cm}\noindent
{\bf #1}\vspace{0.1cm}}
\newtheorem{theorem}{Theorem}[section]
\newtheorem{lemma}[theorem]{Lemma}
\newtheorem{cor}[theorem]{Corollary}
\newtheorem{remark}[theorem]{Remark}
\newtheoremstyle{thm}{1.5ex}{1.5ex}{\itshape\rmfamily}{}
{\bfseries\rmfamily}{}{2ex}{}
\newtheoremstyle{rem}{1.3ex}{1.3ex}{\rmfamily}{}
{\itshape\rmfamily}{}{1.5ex}{}
\theoremstyle{rem}
\def\thebibliography#1{\section*{References}
  \list%
  {\arabic{enumi}.}
    {\settowidth\labelwidth{[#1]}\leftmargin\labelwidth
    \advance\leftmargin\labelsep
    \parsep0pt\itemsep0pt
    \usecounter{enumi}}
    \def\newblock{\hskip .11em plus .33em minus .07em}
    \sloppy                   
    \sfcode`\.=1000\relax}
\begin{document}
\title[Large deviations for RWRC]
{\large Large deviations for the local times\\ \medskip of a random walk among random conductances}
\author[Wolfgang K\"onig, Michele Salvi and Tilman Wolff]{}
\maketitle
\thispagestyle{empty}
\vspace{-0.5cm}

\centerline{\sc By Wolfgang K\"onig$ $\footnotemark[1]$^,$\footnotemark[2], Michele Salvi\footnote{Institute for Mathematics, TU Berlin, Str.~des 17.~Juni 136, 10623 Berlin, Germany, {\tt koenig@math.tu-berlin.de} and {\tt salvi@math.tu-berlin.de}}
and Tilman Wolff\footnote{Weierstrass Institute Berlin, Mohrenstr.~39, 10117 Berlin, {\tt koenig@wias-berlin.de} and \tt wolff@wias-berlin.de}}
\renewcommand{\thefootnote}{}

\footnote{

\textit{AMS Subject
Classification:} 60J65, 60J55, 60F10.}
\footnote{\textit{Keywords:} continuous-time random walk, random conductances, randomly perturbed Laplace operator, large deviations, Donsker-Varadhan rate function.}

\vspace{-0.5cm}
\centerline{\textit{Weierstrass Institute Berlin and TU Berlin}}
\vspace{0.2cm}

\begin{center}
\version
\end{center}

\begin{quote}{\small {\bf Abstract: } We derive an annealed large deviation principle for the normalised local times of a continuous-time random walk among random conductances in a finite domain in $\Z^d$ in the spirit of Donsker-Varadhan \cite{DV75}. We work in the interesting case that the conductances may assume arbitrarily small values. Thus, the underlying picture of the principle is a joint strategy of small values of the conductances and large holding times of the walk. The speed and the rate function of our principle are explicit in terms of the lower tails of the conductance distribution. As an application, we identify the logarithmic asymptotics of the lower tails of the principal eigenvalue of the randomly perturbed negative Laplace operator in the domain.}
\end{quote}

\section{Introduction}

We introduce the main object of our study in Section~\ref{sec-LapOmega}, present our main results in Section~\ref{sec:MainRes} and give a heuristic explanation in Section~\ref{sec:Heur}. The proof of the main theorem is carried out in Sections~\ref{sec:proof_lb} and~\ref{sec:proof_ub}.

\subsection{Continuous-time random walk among random conductances}\label{sec-LapOmega}

\noindent Consider the lattice $\Z^d$ with $E=\{\{x,y\}\colon x,y\in \Z^d, x\sim y\}$ the set of nearest-neighbour bonds. Assign to any edge $\{x,y\}\in E$  a random weight $\om_{\{x,y\}}\in[0,\infty)$. We will use the notation $\om_{\{x,y\}}=\om_\xy=\om_{yx}$ for convenience. Assume that $\om=(\om_\xy)_{\{x,y\}\in E}$ is a family of nonnegative i.i.d.\ random variables. We refer to them as {\it random conductances}. One of the main objects of the present paper is the randomly perturbed Laplacian $\Delta^{\om}$ defined by
\begin{equation}\label{eqn:generator}
\Delta^\omega f(x):=\sum_{y\in\Z^d\colon y\sim x}\omega_\xy (f(y)-f(x)),\qquad f\colon\Z^d\to\R,\,x\in\Z^d.
\end{equation}
This operator is symmetric and generates the continuous-time random walk $(X_t)_{t\in[0,\infty)}$ in $\Z^d$, the {\it random walk among random conductances (RWRC)} or, as many authors call it, {\it random conductance model (RCM)}. This process starts at $x\in\Z^d$ under $\P_x^\om$ and evolves as follows. When located at $y$, it waits an exponential random time with parameter $\sum_{z\sim y}\om_{yz}$ (i.e., with expectation $1/\sum_{z\sim y}\om_{yz}$) and then jumps to a neighbouring site $z'$ with probability $\om_{yz'}/\sum_{z\sim y}\om_{yz}$. We write $\Pr$ for the probability and $\langle\cdot\rangle$ for the expectation with respect to $\om$.

In some recent publications (see, e.g., \cite{BD10}), the above walk is called {\it variable-speed random walk (VSRW)} in contrast to the {\it constant-speed random walk (CSRW)}, where the holding times have parameter one, and to the discrete-time version of the RWRC, where the jumps occur at integer times. Substantial differences between these two variants appear, for example, in slow-down phenomena. These are typically due to extremely large holding times in the former case, but to so-called traps (regions of transition probabilities in which the path loses much time) in the two latter cases. A further aspect is that continuous-time random walks may reach any point in finite time with positive probability, in contrast to discrete-time walks. All these processes are versions of RWRC.

Let us mention some earlier work on RWRC. For the discrete-time setting, a quenched functional CLT is derived in \cite{BP07}, assuming that the conductances take values in $[0,1]$. In \cite{BBHK08} and \cite{FM06}, the authors examine the probability for the random walk to return to the origin in the quenched and annealed case, respectively. Here, the lower tails of the distribution of the conductances have polynomial decay. The quenched functional CLT has been addressed for the CSRW in \cite{M08} and for both the CSRW and VSRW in \cite{BD10}, the former considering conductances in $[0,1]$, the latter requiring the conductances to be bounded away from zero. Weak convergence to some L\'evy process after proper rescaling is established in \cite{BC10} for conductances bounded away from zero.

The main purpose of this paper is the description of the long-time behaviour of the walk in a given finite connected set $B\subset\Z^d$ containing the starting point. More precisely, we derive a {\it large deviation principle (LDP)} for the local times of the walk, which are defined by
\begin{equation}
\ell_t(z)=\int_{0}^t\delta_{X_s}(z)\,\d s,\qquad z\in\Z^d,t>0.
\end{equation}
In words, $\ell_t(z)$ is the amount of time that the walker spends in $z$ by time $t$. The speed and the rate function of this LDP are explicit. 

One application is a characterization of the logarithmic asymptotics of the non-exit probability from $B$. As this is standard and well-known under the quenched law $\P_0^\om$, we will work under the annealed law $\langle\P_0^\om(\cdot)\rangle$ instead. One of our motivations are the seminal works \cite{DV75} and \cite{G77} on large deviations for the occupation time measures of various types of Markov processes. Another one is the question of the extremal behaviour of the principal eigenvalue of the random operator $\Delta^\om$ in $B$. 

We concentrate on the interesting case where the conductances are positive, but can assume arbitrarily small values. Here the annealed behaviour comes from a combined strategy of the conductances and the walk, and the description of their interplay is the focus of our study. Losely speaking, the optimal joint strategy of the conductances and the walk to meet the non-exit condition $X_{[0,t]}\subset B$ for large $t$ is that the conductances assume extremely small $t$-dependent values and the walker realizes very large $t$-dependent holding times and/or trajectories that do not leave $B$. We will informally describe this picture in greater detail.

\subsection{Main result}\label{sec:MainRes}

\noindent Our main assumption on the i.i.d.~field $\om$ of conductances is that, for any $\{x,y\}\in E$,
\begin{equation}\label{mainass}
\omega_\xy\in(0,\infty)\qquad\mbox{and}\qquad{\rm essinf}\;(\omega_\xy)=0.
\end{equation}
More specifically, we require some regularity of the lower tails, namely the existence of two parameters $\eta,D\in(0,\infty)$ such that
\begin{equation}\label{eqn:tails_behaviour}
\log \Pr(\om_\xy\leq \varepsilon)\sim -D\eps^{-\eta},\qquad \eps\downarrow 0.
\end{equation}
That is, the edge weights can attain arbitrarily small values with prescribed probabilities.

Our main theorem is the following large deviation principle for the normalised local times before exiting  $B$. That is, we restrict to the event $\{X_{[0,t]}\subset B\}=\{\supp(\ell_t)\subset B\}$. By
\begin{equation}\label{EBdef}
E_B:=\{\{x,y\}\colon x\in B, y\in\Z^d,y\sim x\}
\end{equation}
we denote the set of edges connecting the sites of $B$ with their neighbours both in $B$ and outside.

\begin{theorem}[Annealed LDP for $\frac 1t \ell_t$]\label{thm:LDP_finite}
Assume that $\om$ satisfies \eqref{mainass} and \eqref{eqn:tails_behaviour}. Fix a finite connected set $B\subset\Z^d$ containing the origin. Then the  process of normalized local times, $(\frac{1}{t}\ell_t)_{t>0}$, under the annealed sub-probability law $\langle\P_0^\om(\,\cdot\,\cap\{X_{[0,t]}\subset B\})\rangle$ satisfies an LDP on $\Mcal_1(B)$, the space of probability measures on $B$, with speed $t^\frac{\eta}{\eta+1}$ and rate function $J$ given by
\begin{equation}\label{eqn:ratefunction_finite}
J(g^2):=K_{\eta,D}\sum_{\{x,y\}\in E_B}|g(y)-g(x)|^\frac{2\eta}{\eta+1},\qquad g\in \ell^2(\Z^d), \supp(g)\subset B, \|g\|_2=1,
\end{equation}
where $K_{\eta,D}=\big(1+\frac{1}{\eta}\big)(D\eta)^\frac{1}{\eta+1}$.
\end{theorem}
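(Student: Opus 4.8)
The plan is to prove the LDP by the standard two-step route: first establish an LDP for the random walk alone under a \emph{fixed} (but $t$-dependent) small conductance configuration, and then integrate over the conductance randomness using Varadhan-type arguments. Concretely, for a deterministic configuration $\om$ the process $\frac1t\ell_t$ under $\P_0^\om(\,\cdot\,\cap\{X_{[0,t]}\subset B\})$ satisfies a Donsker--Varadhan LDP with speed $t$ and rate function the Dirichlet form $g^2\mapsto \frac12\sum_{\{x,y\}\in E_B}\om_{\xy}|g(y)-g(x)|^2$ on $\Mcal_1(B)$ (this is classical for continuous-time Markov processes restricted to a finite set; see \cite{DV75,G77}). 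The point is that here $\om$ itself will be chosen small, on a scale that makes the walk's speed-$t$ cost comparable to the conductance's probabilistic cost coming from \eqref{eqn:tails_behaviour}.

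For the \textbf{lower bound}, fix $g$ with $\supp(g)\subset B$, $\|g\|_2=1$. I would restrict the annealed expectation to the event that each conductance $\om_{\xy}$, $\{x,y\}\in E_B$, lies in a small window around a value $a_\xy t^{-1/(\eta+1)}$ for suitable constants $a_\xy\ge 0$. On this event the quenched LDP lower bound gives a contribution $\exp(-t\cdot\frac12\sum a_\xy t^{-1/(\eta+1)}|g(y)-g(x)|^2 (1+o(1))) = \exp(-t^{\eta/(\eta+1)}\,\frac12\sum a_\xy|g(y)-g(x)|^2(1+o(1)))$, while by \eqref{eqn:tails_behaviour} the probabilistic cost of the conductance window is $\exp(-t^{\eta/(\eta+1)}\,\sum D a_\xy^{-\eta}(1+o(1)))$ (using $\Pr(\om_\xy\le \eps)$ with $\eps = a_\xy t^{-1/(\eta+1)}$, so $\eps^{-\eta}=a_\xy^{-\eta}t^{\eta/(\eta+1)}$). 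Both terms share the speed $t^{\eta/(\eta+1)}$. Optimizing $\frac12 a|g(y)-g(x)|^2 + Da^{-\eta}$ over $a>0$ edge-by-edge yields exactly $K_{\eta,D}|g(y)-g(x)|^{2\eta/(\eta+1)}$, which is where the constant $K_{\eta,D}=(1+\tfrac1\eta)(D\eta)^{1/(\eta+1)}$ comes from. Summing over edges gives the matching lower bound $-J(g^2)$; then one upgrades from fixed $g$ to open neighbourhoods in $\Mcal_1(B)$ in the usual way.

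For the \textbf{upper bound} one needs uniformity, which is the delicate part. I would use the exponential Chebyshev / spectral method: for any bounded $V\colon B\to\R$,
\begin{equation}\label{eqn:ub-feynman-kac}
\big\langle \E_0^\om\big[\e^{t\langle V,\,\frac1t\ell_t\rangle}\1\{X_{[0,t]}\subset B\}\big]\big\rangle \le \big\langle \e^{t\,\lambda_B(\om,V)}\big\rangle ,
\end{equation}
where $\lambda_B(\om,V)$ is the principal eigenvalue of $\Delta^\om + V$ in $B$ with Dirichlet boundary condition, controlled via the Rayleigh--Ritz variational formula $\lambda_B(\om,V)=\sup_{\|g\|_2=1,\,\supp g\subset B}\big(\langle V,g^2\rangle - \frac12\sum_{\{x,y\}\in E_B}\om_\xy|g(y)-g(x)|^2\big)$. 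One then splits the conductance space according to a discretization of the vector $(\om_\xy t^{1/(\eta+1)})_{\{x,y\}\in E_B}$ into finitely many cells, bounds $\langle \e^{t\lambda_B}\rangle$ cell by cell using \eqref{eqn:tails_behaviour} for the probability and the variational formula for the eigenvalue, takes $\frac{1}{t^{\eta/(\eta+1)}}\log$, and lets $t\to\infty$ (the finite number of cells causes no harm). This produces an upper bound of the form $\sup_{g}\big(\langle V,g^2\rangle - J(g^2)\big)$, i.e.\ the Legendre transform of $J$, and since $\Mcal_1(B)$ is compact (being a finite-dimensional simplex), Varadhan's lemma / the abstract Gärtner--Ellis upper bound packaged for compact spaces converts this into the full large-deviation upper bound with rate function $J$ (one should also check $J$ is lower semicontinuous and has compact level sets, which is immediate as $J$ is continuous on the compact set $\Mcal_1(B)$, and check $J=J^{**}$, i.e.\ convexity of $J$ — here $t\mapsto t^{2\eta/(\eta+1)}$ is convex so $J$ is convex).

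The main obstacle I expect is controlling the contribution of edges where $g$ nearly vanishes on one endpoint versus the generic edges, and more seriously the uniform treatment of the conductance configuration in the upper bound: conductances near zero make the quenched rate function degenerate, so one must carefully quantify the trade-off and handle the boundary edges in $E_B\setminus E(B)$ (those leaving $B$), where $g$ vanishes outside $B$ so the term is $\om_\xy |g(x)|^2$ and small $\om_\xy$ is again favourable. Making the cell-decomposition argument genuinely uniform — in particular showing that the discretization error and the continuum limit of the edge-wise optimization both vanish on the scale $t^{\eta/(\eta+1)}$ — is the technical heart of the proof and is presumably what Sections~\ref{sec:proof_lb} and~\ref{sec:proof_ub} are devoted to.
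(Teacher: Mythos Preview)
Your lower bound is essentially the paper's argument (modulo a spurious factor $\tfrac12$ in the Dirichlet form --- here $(-\Delta^\om g,g)=\sum_{\{x,y\}\in E_B}\om_{xy}|g(y)-g(x)|^2$ with no $\tfrac12$, and with your $\tfrac12$ the edge optimization would not produce $K_{\eta,D}$). The upper bound, however, has a genuine gap: the G\"artner--Ellis route through the scaled cumulant generating function recovers only the biconjugate $J^{**}$, and you need $J=J^{**}$. But $J$ is \emph{not} convex as a function of $g^2\in\Mcal_1(B)$. Your justification --- that $t\mapsto t^{2\eta/(\eta+1)}$ is convex --- already fails for $\eta<1$, and is in any case beside the point because $J$ depends on the measure through $g=\sqrt{g^2}$. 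For instance, with $B=\{0,1\}\subset\Z$ and $\eta=1$ one computes $J(g^2)/K_{\eta,D}=2\max\big(\sqrt{g^2(0)},\sqrt{g^2(1)}\big)$, and $2\sqrt{s}$ is strictly concave on $[\tfrac12,1]$, so $J$ violates convexity there. Thus your upper bound would only yield $-\inf_C J^{**}>-\inf_C J$ and would not match the lower bound. This is the same obstruction that blocks a naive G\"artner--Ellis proof of the classical Donsker--Varadhan LDP.

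The paper avoids convex duality entirely. It proves a Feynman--Kac test-function estimate (Lemma~\ref{lemma:ub_via_fc}): for any $f$ positive on $B$,
\[
\P^\varphi_0\big(\tfrac1t\ell_t\in A\big)\le \frac{f(0)}{\min_Bf}\,\exp\Big\{t\sup_{h^2\in A}\sum_{x\in B}\frac{\Delta^\varphi f(x)}{f(x)}\,h^2(x)\Big\},
\]
covers the closed set $C$ by small balls $B_{\delta_g}(g^2)$, and on each ball takes $f=g+\sqrt{\delta_g}\,\1_B$. A direct calculation bounds the exponent by $-(1-7\delta^{1/4})\sum_{\{x,y\}\in E_B}\varphi_{xy}|g(x)-g(y)|^2$, which \emph{factorizes over edges}; the annealed expectation can therefore be taken edge by edge via de Bruijn's Tauberian theorem (equivalent to \eqref{eqn:tails_behaviour}), producing $-J(g^2)$ directly, with no Legendre transform. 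A finite subcover and $\delta\downarrow0$ finish the argument. The nonlinear test functionals $h^2\mapsto\sum_x\frac{\Delta^\varphi f}{f}(x)\,h^2(x)$ are precisely what is needed to capture a non-convex rate function; replacing them by linear tilts $\langle V,\cdot\rangle$ as you propose loses exactly this.
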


The proof of Theorem~\ref{thm:LDP_finite} is given in Section~\ref{sec:proof}. More explicitly, it says
\begin{eqnarray}
\liminf_{t\to\infty}t^{-\frac{\eta}{\eta+1}}\log\Big\langle\P_0^{\om}\Big(\smfrac{1}{t}\ell_t\in O,X_{[0,t]}\subset B\Big)\Big\rangle
&\geq&-\inf_{g^2\in O}J(g^2) \qquad \mbox{for }O\subset \Mcal_1(B)\mbox{ open,} \label{eqn:lbound}\\
\limsup_{t\to\infty}t^{-\frac{\eta}{\eta+1}}\log \Big\langle\P_0^{\omega}\Big(\smfrac{1}{t}\ell_t\in C,X_{[0,t]}\subset B\Big)\Big\rangle
&\leq&-\inf_{g^2\in C}J(g^2)\qquad\mbox{for }C\subset \Mcal_1(B) \mbox{ closed,} \label{eqn:ubound}
\end{eqnarray}
and that the rate function $J$ has compact level sets. Our convention is to extend any probability measure on $B$ trivially to a probability measure on $\Z^d$; note the zero boundary condition in $B$ that is induced in this way.

A heuristic explanation of the speed and rate function is given in Section~\ref{sec:Heur}. It turns out there that the conductances that give the most contribution to the LDP are of order $t^{-1/(1+\eta)}$ and assume a certain deterministic shape.

With the special choice $O=C=\Mcal_1(B)$, we obtain the following corollary.
\begin{cor}[Non-exit probability from $B$]
Under the assumptions of Theorem~\ref{thm:LDP_finite},
\begin{equation}\label{exitasy}
\lim_{t\to\infty}t^{-\frac{\eta}{\eta+1}}\log\Big\langle\P_0^{\om}\big(X_{[0,t]}\subset B\big)\Big\rangle=-K_{\eta,D}L_\eta(B),
\end{equation}
where 
\begin{equation}\label{kappadef}
L_\eta(B)=\inf_{g^2\in \Mcal_1(B)}\sum_{\{x,y\}\in E_B}|g(y)-g(x)|^\frac{2\eta}{\eta+1}.
\end{equation} 
\end{cor}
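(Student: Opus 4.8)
The plan is to deduce the corollary directly from the large deviation principle of Theorem~\ref{thm:LDP_finite} by specializing the open and closed sets to the whole space. Since $\Mcal_1(B)$ is simultaneously open and closed in itself, choosing $O=C=\Mcal_1(B)$ in \eqref{eqn:lbound} and \eqref{eqn:ubound} gives matching bounds
\begin{equation*}
\liminf_{t\to\infty}t^{-\frac{\eta}{\eta+1}}\log\Big\langle\P_0^{\om}\big(\smfrac1t\ell_t\in\Mcal_1(B),\,X_{[0,t]}\subset B\big)\Big\rangle\geq-\inf_{g^2\in\Mcal_1(B)}J(g^2)
\end{equation*}
and the reverse inequality with $\limsup$. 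Now I would observe that on the event $\{X_{[0,t]}\subset B\}$ the normalized local time $\frac1t\ell_t$ is automatically a probability measure supported on $B$, hence lies in $\Mcal_1(B)$; indeed $\ell_t(z)\geq 0$, $\sum_{z}\ell_t(z)=\int_0^t 1\,\d s=t$, and $\supp(\ell_t)\subset B$ precisely because $X_{[0,t]}\subset B$. Therefore the event $\{\frac1t\ell_t\in\Mcal_1(B)\}\cap\{X_{[0,t]}\subset B\}$ coincides with $\{X_{[0,t]}\subset B\}$, and the two displayed bounds collapse to
\begin{equation*}
\lim_{t\to\infty}t^{-\frac{\eta}{\eta+1}}\log\Big\langle\P_0^{\om}\big(X_{[0,t]}\subset B\big)\Big\rangle=-\inf_{g^2\in\Mcal_1(B)}J(g^2).
\end{equation*}

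It remains to identify the infimum with $K_{\eta,D}L_\eta(B)$. This is immediate from the explicit form \eqref{eqn:ratefunction_finite}: the constant $K_{\eta,D}$ factors out of the sum, so
\begin{equation*}
\inf_{g^2\in\Mcal_1(B)}J(g^2)=K_{\eta,D}\inf_{\substack{g\in\ell^2(\Z^d),\ \supp(g)\subset B\\ \|g\|_2=1}}\sum_{\{x,y\}\in E_B}|g(y)-g(x)|^{\frac{2\eta}{\eta+1}}=K_{\eta,D}L_\eta(B),
\end{equation*}
where the last equality is just the definition \eqref{kappadef}, bearing in mind the convention that $g^2\in\Mcal_1(B)$ means exactly $g\in\ell^2(\Z^d)$, $\supp(g)\subset B$, $\|g\|_2=1$. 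Substituting this into the previous display yields \eqref{exitasy}.

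There is essentially no obstacle here: the corollary is a formal consequence of the LDP, and the only point requiring a word of care is the harmless identification of the event $\{X_{[0,t]}\subset B\}$ with $\{\frac1t\ell_t\in\Mcal_1(B),\,X_{[0,t]}\subset B\}$, i.e.\ the fact that a particle staying in $B$ up to time $t$ produces a genuine probability measure on $B$ upon normalizing its occupation times. One should also note that $L_\eta(B)$ is finite (take $g$ constant on $B$, which contributes only through the boundary edges in $E_B\setminus E_{\mathrm{int}}$) and strictly positive (since $g$ must vanish outside $B$ while $\|g\|_2=1$, at least one boundary edge carries a nonzero increment), so the statement is non-degenerate; but neither fact is needed for the proof of the identity itself.
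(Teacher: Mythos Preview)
Your proof is correct and follows exactly the approach the paper indicates: the paper simply writes ``With the special choice $O=C=\Mcal_1(B)$, we obtain the following corollary,'' and you have spelled out precisely this argument, including the (implicit in the paper) identification of $\{X_{[0,t]}\subset B\}$ with $\{\frac1t\ell_t\in\Mcal_1(B),\,X_{[0,t]}\subset B\}$ and the factoring of $K_{\eta,D}$.
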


From Theorem~\ref{thm:LDP_finite}, we also derive the precise logarithmic lower tails of the principal (i.e., smallest) eigenvalue $\lambda^\om(B)$ of $-\Delta^\om$ in $B$ with zero boundary condition.

\begin{cor}[Lower tails for the bottom of the spectrum of $\Delta^\om$]\label{cor-lowtailslambda}
Under the assumptions of Theorem~\ref{thm:LDP_finite},
$$
\lim_{\eps\downarrow 0}\eps^{\eta}\log\Pr(\lambda^\om(B)\leq \eps)=-DL_\eta(B)^{\eta+1}.
$$
\end{cor}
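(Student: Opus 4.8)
The plan is to connect the lower tails of $\lambda^\om(B)$ to the non-exit probability asymptotics \eqref{exitasy} via the standard spectral-theoretic bound relating the survival probability of the killed walk to the bottom of the spectrum. The Dirichlet form of $-\Delta^\om$ in $\ell^2(B)$ (zero boundary condition) is $\mathcal{E}^\om(g,g)=\sum_{\{x,y\}\in E_B}\om_\xy|g(y)-g(x)|^2$, so by Rayleigh--Ritz, $\lambda^\om(B)=\inf\{\mathcal{E}^\om(g,g):\supp(g)\subset B,\|g\|_2=1\}$. Since $(X_t)$ is generated by $\Delta^\om$, the killed semigroup gives, for the starting point $0$, the two-sided estimate
\begin{equation}\label{eq:semigroup-bound}
c_B\,\e^{-t\lambda^\om(B)}\leq \P_0^\om\big(X_{[0,t]}\subset B\big)\leq C_B\,\e^{-t\lambda^\om(B)}
\end{equation}
for constants $c_B,C_B\in(0,\infty)$ depending only on $B$ (not on $\om$), obtained by expanding in the $\om$-eigenbasis and using that $0\in B$ has a non-degenerate overlap with the principal eigenfunction; here one needs only the trivial bound $\P_0^\om(X_{[0,t]}\subset B)\geq \P_0^\om(X_t=0,\,X_{[0,t]}\subset B)$ from below together with the eigenfunction expansion, and a crude $\ell^1$-vs-$\ell^2$ comparison on the finite set $B$ from above.

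Given \eqref{eq:semigroup-bound}, taking $\langle\cdot\rangle$-expectations reduces the corollary to a Varadhan-type evaluation of $\langle \e^{-t\lambda^\om(B)}\rangle$ as $t\to\infty$. On the one hand, \eqref{exitasy} already identifies $t^{-\eta/(\eta+1)}\log\langle \e^{-t\lambda^\om(B)}\rangle\to -K_{\eta,D}L_\eta(B)$. On the other hand, writing $\langle \e^{-t\lambda^\om(B)}\rangle=\int_0^\infty t\e^{-t\lambda}\Pr(\lambda^\om(B)\leq \lambda)\,\d\lambda$ and assuming tentatively that $\log\Pr(\lambda^\om(B)\leq\eps)\sim -M\eps^{-\eta}$ for some constant $M$, a Laplace/saddle-point evaluation of this integral gives $\log\langle \e^{-t\lambda^\om(B)}\rangle\sim -c(\eta)M^{1/(\eta+1)}t^{\eta/(\eta+1)}$ with the explicit constant $c(\eta)=(\eta+1)\eta^{-\eta/(\eta+1)}$ coming from optimizing $t\lambda+M\lambda^{-\eta}$ over $\lambda>0$ at $\lambda_*=(M\eta/t)^{1/(\eta+1)}$. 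Matching the two expressions forces
\[
c(\eta)\,M^{1/(\eta+1)}=K_{\eta,D}L_\eta(B)=\big(1+\tfrac1\eta\big)(D\eta)^{1/(\eta+1)}L_\eta(B),
\]
and since $c(\eta)=(\eta+1)\eta^{-\eta/(\eta+1)}=(1+\tfrac1\eta)(\eta)^{1/(\eta+1)}$, solving yields $M=DL_\eta(B)^{\eta+1}$, which is exactly the claimed constant.

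To make this rigorous rather than a consistency check, I would argue directly for the upper and lower bounds on $\Pr(\lambda^\om(B)\leq\eps)$. For the lower bound: choose a near-minimizer $g$ of $L_\eta(B)$ in \eqref{kappadef}, and on the event that each conductance $\om_\xy$ with $\{x,y\}\in E_B$ is at most $\delta_\eps:=$ (a suitable multiple of) $\eps\,|g(y)-g(x)|^{-(\eta+1)/\eta}$ (adjusting trivially on edges where $g$ agrees), the Rayleigh quotient of $g$ is $\leq\eps$, so $\lambda^\om(B)\leq\eps$; by independence and \eqref{eqn:tails_behaviour}, the probability of this event has $\log$ asymptotically $-\sum D\delta_\eps^{-\eta}=-D\eps^{-\eta}\sum|g(y)-g(x)|^{2\eta/(\eta+1)}$, and optimizing the split of $\eps$ among the edges (a finite-dimensional Lagrange computation, using Hölder) reproduces $-DL_\eta(B)^{\eta+1}\eps^{-\eta}$ up to $o(\eps^{-\eta})$. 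For the upper bound, one combines the Varadhan argument above with \eqref{exitasy}: since $\Pr(\lambda^\om(B)\leq\eps)\leq \e^{t\eps}\langle\e^{-t\lambda^\om(B)}\rangle$ for every $t>0$, choosing $t=t(\eps)$ to be the saddle point gives $\eps^\eta\log\Pr(\lambda^\om(B)\leq\eps)\leq -DL_\eta(B)^{\eta+1}+o(1)$.

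The main obstacle is controlling the exchange of limits in the Varadhan/Laplace step: \eqref{exitasy} is only a logarithmic statement at a specific scale, so one must verify that the dominant contribution to $\int_0^\infty t\e^{-t\lambda}\Pr(\lambda^\om(B)\leq\lambda)\,\d\lambda$ genuinely comes from $\lambda$ of order $\eps_*(t)\sim t^{-1/(\eta+1)}$ and not from atypically small or large eigenvalues; this requires an a priori upper bound on $\Pr(\lambda^\om(B)\leq\lambda)$ for all small $\lambda$, which however follows from the lower-bound construction's upper counterpart (any configuration with $\lambda^\om(B)\leq\lambda$ must have many small conductances, quantified via the Dirichlet form and the fact that $B$ is connected so at least one edge-cut must be small). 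Once that a priori bound is in hand, splitting the $\lambda$-integral into the saddle-point window and its complement and estimating each piece is routine.
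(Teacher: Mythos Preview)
Your overall strategy coincides with the paper's: both link $\langle\P_0^\om(X_{[0,t]}\subset B)\rangle$ to $\langle\e^{-t\lambda^\om(B)}\rangle$ via eigenfunction expansion, invoke \eqref{exitasy}, and then convert the large-$t$ Laplace asymptotic into small-$\eps$ tail asymptotics for $\lambda^\om(B)$. The difference lies in that last conversion. The paper simply cites de~Bruijn's exponential Tauberian theorem \cite[Theorem~4.12.9]{BGT89}, which states precisely that $\log\langle\e^{-t\lambda^\om(B)}\rangle\sim -K_{\eta,D}L_\eta(B)\,t^{\eta/(\eta+1)}$ is \emph{equivalent} to $\log\Pr(\lambda^\om(B)\leq\eps)\sim -DL_\eta(B)^{\eta+1}\eps^{-\eta}$, with the constants related exactly as in your saddle-point computation. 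Your Chebyshev upper bound and the ``main obstacle'' you isolate (controlling the Laplace integral away from the saddle) are precisely the ingredients of de~Bruijn's proof, so you are effectively re-deriving a special case of that theorem. Your direct lower-bound construction via a near-optimal $g$ and edge-wise conductance thresholds is a legitimate alternative that bypasses the Tauberian machinery on that side, and the Lagrange/H\"older optimization you indicate does give the right constant.

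One point to repair: your two-sided bound \eqref{eq:semigroup-bound} with $c_B$ independent of $\om$ is not justified. The lower constant in the eigenfunction expansion involves the overlap $v_1^\om(0)(v_1^\om,\1)$, which is positive by Perron--Frobenius but has no obvious uniform-in-$\om$ lower bound. The paper sidesteps this by summing over all starting points, $\e^{-t\lambda^\om(B)}\leq\sum_i\e^{-t\lambda_i^\om}(v_i^\om,\1)^2=\sum_{z\in B}\P_z^\om(X_{[0,t]}\subset B)$, and then uses shift-invariance of $\om$ to show each summand has the same annealed logarithmic asymptotics as the $z=0$ term. Since your direct lower-bound construction does not actually use the semigroup lower bound, this issue is harmless for your argument, but the claim as stated should be dropped or replaced by the sum-over-starting-points device.
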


\begin{proof}
A Fourier expansion shows that, $\Pr\text{-almost surely,}$
$$
\P^\om_0(X_{[0,t]}\subset B)=\sum_{i=1}^{|B|}\e^{-t\lambda_i^\om}v_i^\om(0)( v_i^\om,\1)\leq \sum_{i=1}^{|B|}\e^{-t\lambda_i^\om}|B|\leq |B|^2\e^{-t\lambda^\om(B)},
$$
where $0<\lambda^\om(B)=\lambda_1^\om\leq\dots\leq\lambda_{|B|}^\om$ are the eigenvalues of $\Delta^\om$ with zero boundary condition in $B$ and $(v_i^\om)_{i=1,\dots,|B|}$ a corresponding orthonormal base of eigenvectors. We also have, $\Pr\text{-almost surely,}$
$$
\e^{-t\lambda^\om(B)}\leq \sum_{i=1}^{|B|}\e^{-t\lambda_i^\om}( v_i^\om,\1)^2\leq \sum_{z\in B}\P^\om_z(X_{[0,t]}\subset B).
$$
Applying Theorem~\ref{thm:LDP_finite} to $B-z$ and using the shift-invariance of $\omega$, we see that the expectation of the right-hand side has the same logarithmic asymptotics as $\langle \P^\om_0(X_{[0,t]}\subset B)\rangle$.
Therefore, the two above inequalities show that 
\begin{equation}\label{eqn:EVubound}
\log \Big\langle\e^{-t\lambda^\om(B)}\Big\rangle\sim\log \Big\langle\P^\om_0(X_{[0,t]}\subset B)\Big\rangle,\qquad t\to\infty.
\end{equation}
Now de Bruijn's exponential Tauberian theorem \cite[Theorem 4.12.9]{BGT89}, together with \eqref{exitasy} yields the desired asymptotics.
\end{proof}

Theorem~\ref{thm:LDP_finite} holds literally true if $\Z^d$ is replaced by an (infinite or finite) graph and $B$ by some finite subgraph. In future work we will be interested in extensions of Theorem~\ref{thm:LDP_finite} to $B\subset\Z^d$ a $t$-dependent centred box and $\Delta^\om$ replaced by $\Delta^\om +\xi$ with $\xi=(\xi(z))_{z\in\Z^d}$ an i.i.d.~random potential, independent of $\om$.

\subsection{Heuristic derivation}\label{sec:Heur}

We now give a formal derivation of the LDP in Theorem~\ref{thm:LDP_finite}. Given a fixed realisation $\varphi=\{\varphi_\xy\colon\{x,y\}\in E_B\}\in(0,\infty)^{E_B}$ of the conductances, the probability that the normalised local time resembles some realisation  $g^2\in\Mcal_1(B)$ is roughly
\begin{equation}\label{eqn:LD_inthebox_finite}
\P_0^\varphi\Big(\smfrac{1}{t}\ell_t\approx g^2\Big)\approx\exp\big\{-t I_\varphi(g^2)\big\},
\end{equation}
where the corresponding Donsker-Varadhan rate function is given by
\begin{equation}\label{Idef}
I_\varphi(g^2)=\big(-\Delta^\varphi g,g\big)=\sum_{\{x,y\}\in E_B}\varphi_\xy|g(x)-g(y)|^2.
\end{equation}
This is a formal application of the LDP for the normalized occupation times of a Markov process with symmetric generator $\Delta^\varphi$ as in \cite{DV75} and \cite{G77}; by $(\cdot,\cdot)$ we denote the standard inner product on $\ell^2(\Z^d)$. Note that the event $\{X_{[0,t]}\subset B\}$ is contained in $\{\frac{1}{t}\ell_t\approx g^2\}$, therefore we drop it from the notation. 

Taking random conductances into account, we expect an LDP on a slower scale than $t$, as small $t$-dependent values of the conductances lead to a slower decay of the annealed probability of the event $\{\frac{1}{t}\ell_t\approx g^2\}$. Therefore, we rescale $\om$ by a factor $t^r$ with some $r>0$ to be determined later, and approximate
\begin{align}\label{eqn:LD_omega_finite}
\Pr\big(t^r\om\approx \varphi\big)&=\Pr\big(\forall \{x,y\}\in E_B\colon \om_\xy\approx t^{-r}\varphi_\xy\big)=\prod_{\{x,y\}\in E_B}\Pr\big(\om_\xy\approx t^{-r}\varphi_\xy\big)\notag\\
&\approx\exp\big\{-t^{r\eta}H(\varphi)\big\},
\end{align}
where the rate function for the conductances  is given by
\begin{equation}\label{Hdef}
H(\varphi):=D\sum_{\{x,y\}\in E_B}\varphi_\xy^{-\eta}.
\end{equation} 
Here we made use of the tail assumptions in \eqref{eqn:tails_behaviour}. Hence, combining \eqref{eqn:LD_inthebox_finite} and \eqref{eqn:LD_omega_finite},
\begin{align}\label{eqn:LD_combined_finite}
\Big\langle\P_0^{\om}\big(\smfrac{1}{t}\ell_t\approx g^2\big)\1_{\{t^r\om\approx\varphi\}}\Big\rangle
&\approx\P_0^{t^{-r}\varphi}\big(\smfrac{1}{t}\ell_t\approx g^2\big)\Pr\big(\om\approx t^{-r}\varphi\big)\notag\\
&\approx \exp\Big\{- tI_{t^{-r}\varphi}(g^2)-t^{r\eta}H(\varphi)\Big\}\notag\\
&\approx\exp\Big\{-\sum_{\{x,y\}\in E_B} \Big(t^{1-r}\varphi_\xy\big(g(x)-g(y)\big)^2+t^{r\eta}D\varphi_\xy^{-\eta}\Big)\Big\}.
\end{align}
We obtain the slowest decay by choosing $r$ such that $t^{1-r}=t^{r\eta}$, which means $r=(1+\eta)^{-1}$. Then the right-hand side has scale $t^{\frac{\eta}{\eta+1}}$, which is the scale of the desired LDP. In order to find the rate function, we optimize over $\varphi$ and obtain that the choice $\varphi=\varphi^{\ssup g}$ with
\begin{equation}\label{eqn:shape_order_finite}
\varphi^{\ssup g}_\xy=(D\eta)^\frac{1}{\eta+1}|g(y)-g(x)|^{-\frac{2}{\eta+1}},\qquad \{x,y\}\in E_B,
\end{equation}
contributes most to the joint probability.
Therefore, we have the result
\begin{equation*}
\Big\langle\P_0^{\om}\big(\smfrac{1}{t}\ell_t\approx g^2\big)\Big\rangle\approx\exp\Big\{-t^{\frac{\eta}{\eta+1}} J(g^2)\Big\},
\end{equation*}
where the rate function is identified as
\begin{equation}\label{rem:rate_function_opt}
 J(g^2)=\inf_\varphi\big[ I_\varphi(g^2) + H(\varphi)\big]=I_{\varphi^{\ssup g}}(g^2) + H(\varphi^{\ssup g})=K_{\eta,D}\sum_{\{x,y\}\in E_B}|g(y)-g(x)|^\frac{2\eta}{\eta+1}.
\end{equation}

The tail assumptions we have made on the environment distribution lead to a fairly remarkable interaction between the random influences of the environment on the one hand and the random walk on the other. Under more general assumptions, e.g.,
\begin{equation*}
\log\Pr(\om_\xy\leq\eps)\sim -\alpha(\eps),\qquad\eps\to 0
\end{equation*}
for some sufficiently regular nonincreasing function $\alpha\colon\R_+\to\R_+$, we would expect an analogous result to hold. However, if $\alpha(\eps)$ is not a polynomial in $\eps$, the scale and rate function of a corresponding LDP certainly would not have such an explicit form.

\section{Proof of Theorem~\ref{thm:LDP_finite}}\label{sec:proof}

In this section, we prove Theorem \ref{thm:LDP_finite}. This amounts to showing the two inequalities in \eqref{eqn:lbound} and \eqref{eqn:ubound}, since the compactness of the level sets follows immediately from the continuity of $J$ and compactness of the space $\Mcal_1(B)$. The two inequalities are proven in the next two sections.

\subsection{Proof of the lower bound}\label{sec:proof_lb}

In order to prove \eqref{eqn:lbound}, we need to control the transition from one realization of the environment to another. To this end, we first identify the density of this transition on process level. We feel that this should be generally known, but could not find a suitable reference. For $\varphi\colon E\to(0,\infty)$ we abbreviate $\bar\varphi(x):=\sum_{y\sim x}\varphi(x,y)$. We also write $\varphi_{\xy}$ instead of $\varphi(x,y)$.

\begin{lemma}\label{lem:density}
Assume that $\varphi,\psi\colon E\to(0,\infty)$ are bounded both from above and away from zero. Denote by $S(t)$ the number of jumps the process $X=(X_s)_{s\in[0,t]}$ makes up to time $t$ and by $0<\tau_1<\ldots<\tau_{S(t)}$ the corresponding jump times. Fix some starting point $x\in\Z^d$ and put $\tau_0=0$. Then, for all $t\in[0,\infty)$,
\begin{equation*}
\Phi_t(X):=
\prod_{i=1}^{S(t)}
\left(\frac{\varphi(X_{\tau_{i-1}},X_{\tau_{i}})}{\psi(X_{\tau_{i-1}},X_{\tau_{i}})}
\e^{-(\tau_i-\tau_{i-1})\left[\bar\varphi(X_{\tau_{i-1}})-\bar\psi(X_{\tau_{i-1}})\right]}
\right)
\e^{-(t-\tau_{S(t)})\left[\bar\varphi(X_t)-\bar\psi(X_t)\right]}
\end{equation*}
is the Radon-Nikodym density of $\P_x^\varphi$ with respect to $\P_x^\psi$ with time horizon $t$. 
\end{lemma}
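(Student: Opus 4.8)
The plan is to compute the density by decomposing the path space of a continuous-time Markov chain into a countable disjoint union of "skeleton" pieces indexed by the number of jumps $n = S(t)$ and the sequence of visited sites $(x_0, x_1, \dots, x_n)$ with $x_0 = x$ and $x_{i-1} \sim x_i$, and then to compare the joint laws of jump times and jump chain under $\P^\varphi_x$ and $\P^\psi_x$ directly. First I would recall the explicit structure of the law $\P^\varphi_x$: given that the walk has visited sites $x_0, \dots, x_n$ with holding times $t_1, \dots, t_n$ and is still sitting at $x_n$ at time $t$ (so $t_1 + \dots + t_n \le t$), the density with respect to Lebesgue measure on the simplex $\{t_1, \dots, t_n > 0,\ \sum t_i \le t\}$ times counting measure on admissible site sequences is
\[
\prod_{i=1}^n \Bigl( \bar\varphi(x_{i-1})\, \e^{-t_i \bar\varphi(x_{i-1})} \cdot \frac{\varphi(x_{i-1},x_i)}{\bar\varphi(x_{i-1})} \Bigr) \cdot \e^{-(t - t_1 - \dots - t_n)\bar\varphi(x_n)}
= \Bigl( \prod_{i=1}^n \varphi(x_{i-1},x_i)\, \e^{-t_i \bar\varphi(x_{i-1})} \Bigr) \e^{-(t - \sum_i t_i)\bar\varphi(x_n)}.
\]
Here each factor $\bar\varphi(x_{i-1})\e^{-t_i\bar\varphi(x_{i-1})}$ is the exponential holding-time density at $x_{i-1}$, the ratio $\varphi(x_{i-1},x_i)/\bar\varphi(x_{i-1})$ is the jump probability, and the final exponential is the probability that the holding time at $x_n$ exceeds the residual time $t - \sum_i t_i$; the two copies of $\bar\varphi(x_{i-1})$ cancel. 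The boundedness assumption on $\varphi$ and $\psi$ (above and away from $0$) guarantees all these quantities are finite and positive and that $S(t)<\infty$ a.s., so there is no issue summing over $n$.

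The second and decisive step is simply to take the ratio of this density for $\varphi$ to the same density for $\psi$, on each skeleton piece. Writing $\tau_i = t_1 + \dots + t_i$ (so $t_i = \tau_i - \tau_{i-1}$ and $\tau_0 = 0$), the ratio is
\[
\frac{\prod_{i=1}^n \varphi(x_{i-1},x_i)}{\prod_{i=1}^n \psi(x_{i-1},x_i)} \cdot \prod_{i=1}^n \e^{-(\tau_i - \tau_{i-1})[\bar\varphi(x_{i-1}) - \bar\psi(x_{i-1})]} \cdot \e^{-(t-\tau_n)[\bar\varphi(x_n) - \bar\psi(x_n)]},
\]
which is exactly the expression $\Phi_t(X)$ in the statement, with $x_{i-1} = X_{\tau_{i-1}}$, $x_i = X_{\tau_i}$, $x_n = X_t$, and $n = S(t)$. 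To conclude rigorously that $\Phi_t$ is the Radon–Nikodym derivative, I would verify that for any bounded measurable functional $F$ of the path up to time $t$,
\[
\E^\psi_x\bigl[F(X)\,\Phi_t(X)\bigr] = \E^\varphi_x\bigl[F(X)\bigr],
\]
by expanding both sides over the skeleton decomposition and integrating over the time-simplex; the integrands match piece by piece because of the density computation above. One should also remark that $\P^\varphi_x$ and $\P^\psi_x$ are mutually absolutely continuous on the time horizon $[0,t]$ — this is where the hypotheses that both $\varphi$ and $\psi$ are bounded away from zero and from above enter, ensuring neither measure charges paths the other misses and that $\Phi_t$ is genuinely integrable (indeed bounded, since $S(t)$ has exponential moments and all factors are controlled).

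The main obstacle — such as it is — is bookkeeping rather than mathematics: one must be careful with the "last" incomplete holding interval $(\tau_{S(t)}, t]$, for which only the survival probability $\e^{-(t-\tau_n)\bar\varphi(x_n)}$ (not a density) enters, and make sure the cancellation of the $\bar\varphi$, $\bar\psi$ prefactors is handled consistently across the $n=0$ case (no jumps, $\Phi_t(X) = \e^{-t[\bar\varphi(x) - \bar\psi(x)]}$) and $n \ge 1$. A secondary point worth stating explicitly is the measure-theoretic setup: the canonical path space here is (a subset of) the Skorokhod space, and the skeleton decomposition gives a genuine $\sigma$-finite reference measure against which both $\P^\varphi_x$ and $\P^\psi_x$ have explicit densities, so the Radon–Nikodym derivative is just the ratio of those densities. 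Apart from these formalities the proof is a direct computation, which is presumably why the authors expect it to be "generally known."
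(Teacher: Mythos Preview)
Your proof is correct and takes a genuinely different route from the paper's. You work directly with the explicit skeleton decomposition of path space and show that both $\P_x^\varphi$ and $\P_x^\psi$ have explicit densities against a common reference measure (Lebesgue on the time simplex times counting measure on admissible site sequences), so that the Radon--Nikodym derivative is literally the ratio of those densities. The paper instead argues indirectly: it first proves $\E_x^\psi[\Phi_t]=1$ via a discrete-time martingale $(Z_n)_n$ and optional sampling at the unbounded stopping time $S(t)+1$ (which requires an extra tail estimate on $\P_x^\psi(S(t)\ge k)$), then invokes Kolmogorov's extension theorem to build a consistent measure $\P_x(A)=\E_x^\psi[\Phi_t\1_A]$, verifies the Markov property for $\P_x$ by hand, and finally computes the infinitesimal generator to identify $\P_x=\P_x^\varphi$. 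Your approach is more elementary and self-contained, exploiting the discrete state space and the explicit jump--hold structure; the paper's route is closer in spirit to Girsanov-type arguments and would port more readily to settings where no finite-dimensional skeleton reference measure is available. One small quibble: your parenthetical claim that $\Phi_t$ is ``indeed bounded'' is not quite right, since the factor $\prod_i \varphi/\psi$ can grow like $C^{S(t)}$ with $S(t)$ unbounded; but this plays no role in your actual argument, which only needs the pointwise equality of densities on each skeleton piece, not any a~priori bound on $\Phi_t$.
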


\begin{proof}
We will write $\Phi_t$ instead of $\Phi_t(X)$. Obviously, $\Phi_t>0$ almost surely. We start showing that, for all $t\geq0$, the expectation of $\Phi_t$ under $\P_x^\psi$ is one. Then, we use Kolmogorov's extension theorem to show the existence of a measure $\P_x$ such that $\P_x(A)=\me_x^\psi(\Phi_t\1_A)$ for all $A\in\Fcal_t$, where $(\Fcal_t)_{t\in[0,\infty)}$ is the natural filtration generated by $X$. It remains to show that the process $X$ under $\P_x$ is a Markov process and that it is generated by $\Delta^\varphi$, which implies $\P_x=\P_x^\varphi$.

Let us start by showing that the expectation of $\Phi_t$ under $\P_x^\psi$ is one. Consider the discrete-time process
\begin{equation*}
Z_n:=\prod_{i=1}^{n}\left(\frac{\varphi(X_{\tau_{i-1}},X_{\tau_{i}})}{\psi(X_{\tau_{i-1}},X_{\tau_{i}})}
\e^{-(\tau_i-\tau_{i-1})\left[\bar\varphi(X_{\tau_{i-1}})-\bar\psi(X_{\tau_{i-1}})\right]}
\right).
\end{equation*}
We have, for $x\in\Z^d$,
\begin{equation*}
\me^\psi_x[Z_1]=\sum_{y\sim x}\frac{\psi_\xy}{\bar\psi(x)}\frac{\varphi_\xy}{\psi_\xy}\int_0^\infty
\,\bar\psi(x)\e^{-\bar\psi(x)s-(\bar\varphi(x)-\bar\psi(x))s}\,{\rm d}s=\sum_{y\sim x}\frac{\varphi_\xy}{\bar\varphi(x)}=1.
\end{equation*}
Combining this equation with the strong Markov property, we see that $(Z_n)_n$ is a martingale with respect to the filtration $(\Fcal_{\tau_n})_{n\in\N}$ generated by the jumping times and that 
\begin{equation}\label{ez1}
\me^\psi_x\left[\frac{\varphi(X_t,X_{\tau_{S(t)+1}})}{\psi(X_t,X_{\tau_{S(t)+1}})}
\e^{-(\tau_{S(t)+1}-t)\left[\bar\varphi(X_t)-\bar\psi(X_t)\right]}\Big\vert
\Fcal_t\right]=\me^\psi_{X_t}\left[Z_1\right]=1
\end{equation}
$\P_x^\psi$-almost surely for all $x\in\Z^d$. Then, we obtain
\begin{equation*}
\me^\psi_x[\Phi_t]=\me^\psi_x[Z_{S(t)+1}],\qquad x\in\Z^d,
\end{equation*}
by inserting the first term of (\ref{ez1}) under the expectation and using that $\Phi_t$ is $\Fcal_t$-measurable. Consequently, it remains to show that $\me^\psi_x[Z_{S(t)+1}]=1$. As $S(t)+1$ is an unbounded, but almost surely finite stopping time with respect to the filtration $(\Fcal_{\tau_n})_{n\in\N}$, the optional sampling theorem yields that $\me^\psi_x[Z_{S(t)+1}]\leq 1$. On the other hand, for all integers $k>0$,
\begin{equation}\label{ez2}
\me^\psi_x[Z_{S(t)+1}]
\geq\me^\psi_x[Z_{S(t)+1}\1_{S(t)+1\leq k}]
=\me^\psi_x[Z_{S(t)+1\wedge k}]-\me^\psi_x[Z_k\1_{S(t)
\geq k}]=1-\me^\psi_x[Z_k\1_{S(t)\geq k}].
\end{equation}
To show that the last term is arbitrarily close to one for large $k$, we recall that on $\{S(t)\geq k\}$
\begin{equation*}
Z_k\leq\left(\frac{\max_{x\in\Z^d,\,y\sim x} \varphi_\xy}{\min_{x\in\Z^d,\,y\sim x} \psi_\xy}\right)^k \e^{t \max\left\{|\varphi_\xy-\psi_\xy|\colon \{x,y\}\in E\right\}}=:\alpha_k,
\end{equation*}
so $\me^\psi_x[Z_k\1_{S(t)\geq k}]$ is bounded from above by $\alpha_k\P_x^\psi(S(t)\geq k)$. As all jumping times are exponentially distributed with a parameter smaller than $\gamma:=\max_{x\in\Z^d}\bar\psi(x)$, we may estimate
\begin{equation*}
\P_x^\psi(S(t)\geq k)\leq\e^{\gamma t}\sum_{n=k}^\infty\frac{(\gamma t)^n}{n!}.
\end{equation*}
The tail of an exponential series is super-exponentially small, which means $\alpha_k\P_x^\psi(S(t)\geq k)\to 0$ for $k\to\infty$. Since (\ref{ez2}) was true for all $k$, we see that $\me^\psi_x[Z_{S(t)+1}]=1$.

For arbitrary $k\in\N$ and $t_1,\ldots,t_k\geq 0$ define $\hat t=\max_{i\in\{1,\ldots,k\}}t_i$ and a measure $Q_{t_1,\ldots,t_k}$ on ${(\Z^d)}^k$ by 
$$
Q_{t_1,\ldots,t_k}(x_1,\ldots,x_k)=\me_x^\psi[\Phi_{\hat t}\1_{\{X_{t_1}=x_1,\ldots,X_{t_k}=x_k\}}],\qquad x_1,\ldots,x_k\in\Z^d.
$$

We verify without much effort that $\me_x^\psi[\Phi_{t+s}\1_A]=\me_x^\psi[\Phi_t\1_A]$ for all $A\in\Fcal_t$ and $t,s>0$, which implies consistency of the family of measures above. Thus, by Kolmogorov's extension theorem, there exists a measure $\P_x$ with finite-dimensional distributions as above, and we have $\P_x(A)=\me_x^\psi[\Phi_t\1_A]$ for all $t>0$ and $A\in\Fcal_t$. We show that the process $X$ under $\P_x$ satisfies the Markov property, i.e.,
\begin{equation}\label{eqn:MarkovProperty}
\me_x[\1_{\{X_{t+s}=y\}}|\Fcal_t]=\P_{X_t}(X_s=y)\quad \P_x\text{-a.s. for all } y\in\Z^d, s,t>0 
\end{equation}
where $\me_x$ denotes expectation with regard to $\P_x$. Note that $\P_{X_t}$ is defined as we have considered an arbitrary starting point $x$ in what we have shown so far. Indeed, for all $A\in\Fcal_t$
\begin{align*}
\me_x\big[\me_x[\1_{\{X_{t+s}=y\}}|\Fcal_t]\1_A\big]&=\me_x[\1_{\{X_{t+s}=y\}}\1_A]=\me^\psi_x[\Phi_{t+s}\1_{\{X_{t+s}=y\}}\1_A]\\
&=\me^\psi_x\big[\me^\psi_x[\Phi_{t+s}\1_{\{X_{t+s}=y\}}|\Fcal_t]\1_A\big]\\
&\overset{(\ast)}=\me^\psi_x\big[\Phi_t\me^\psi_{X_t}[\Phi_s\1_{\{X_s=y\}}]\1_A\big]\\
&=\me_x\big[\me_{X_t}[\1_{\{X_s=y\}}]\1_A\big],
\end{align*}
where equation ($*$) is due to the fact that $X$ satisfies the Markov property under $\P_x^\psi$ and $\Phi_{t+s}\Phi_t^{-1}\1_{\{X_{t+s}=y\}}$ depends only on $X_{[t,t+s]}$. Consequently, we have shown \eqref{eqn:MarkovProperty} and $X$ is a Markov process under $\P_x$ with a unique infinitesimal generator. Elementary calculations show that
\begin{align*}
\frac 1t\Big(\me^\psi_x\left[f(X_t)\Phi_t\right]-f(x)\Big)\xrightarrow{t\to 0}\Delta^\varphi f(x)
\end{align*}
for arbitrary $x\in\Z^d$ and $f\colon\Z^d\rightarrow\R$. This implies $\P_x=\P_x^\varphi$ and the proof is complete.
\end{proof}

Now we use Lemma~\ref{lem:density} to compare probabilities for two environments that are close to each other.

\begin{cor}\label{cor:LDP_density_estimate}
Let $\varphi,\psi\colon E\to(0,\infty)$ with $0<\psi_\xy-\eps\leq\varphi_\xy\leq\psi_\xy+\eps$ for some $\eps>0$ and all $\{x,y\}\in E$. Moreover, let $F$ be some event that depends on the process $(X_s)_{s\in[0,t]}$ up to time $t$ only. Then
$$
\P_0^\varphi\big(F\big)\geq\e^{-4d\eps t}\P_0^{\psi-\eps}\big(F\big).
$$
\end{cor}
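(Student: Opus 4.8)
The plan is to apply Lemma~\ref{lem:density} with the two conductance fields $\varphi$ and $\psi' := \psi - \eps$ (note $\psi'$ is bounded away from zero and above by hypothesis, as is $\varphi$, so the lemma applies), and to bound the Radon--Nikodym density $\Phi_t$ from below by a deterministic constant on the event $F$. Concretely, I write
\begin{equation*}
\P_0^\varphi(F) = \me_0^{\psi'}\big[\Phi_t\,\1_F\big]
\end{equation*}
and aim to show $\Phi_t \geq \e^{-4d\eps t}$ pointwise (on every path), which immediately gives $\P_0^\varphi(F)\geq \e^{-4d\eps t}\,\P_0^{\psi'}(F)$ as claimed.

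To get the pointwise bound on $\Phi_t$, I examine the two types of factors in the product defining $\Phi_t$. First, since $\psi'_\xy = \psi_\xy - \eps \leq \varphi_\xy$ by hypothesis, every ratio $\varphi(X_{\tau_{i-1}},X_{\tau_i})/\psi'(X_{\tau_{i-1}},X_{\tau_i})$ is $\geq 1$, so the product of these ratios only helps and can be dropped. Second, I must control the exponential factors $\e^{-(\tau_i - \tau_{i-1})[\bar\varphi(X_{\tau_{i-1}}) - \bar{\psi'}(X_{\tau_{i-1}})]}$ and the final one $\e^{-(t-\tau_{S(t)})[\bar\varphi(X_t) - \bar{\psi'}(X_t)]}$. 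Here $\bar\varphi(x) - \bar{\psi'}(x) = \sum_{y\sim x}(\varphi_\xy - \psi'_\xy) = \sum_{y\sim x}(\varphi_\xy - \psi_\xy + \eps)$. Using the hypothesis $\varphi_\xy - \psi_\xy \leq \eps$ (so $\varphi_\xy - \psi'_\xy \leq 2\eps$) and the fact that each vertex of $\Z^d$ has $2d$ neighbours, we get $0 \leq \bar\varphi(x) - \bar{\psi'}(x) \leq 2d\cdot 2\eps = 4d\eps$ for every $x$. Hence each exponential factor is at least $\e^{-4d\eps(\tau_i - \tau_{i-1})}$ (respectively $\e^{-4d\eps(t-\tau_{S(t)})}$), and since the holding-time increments $\tau_1-\tau_0, \tau_2-\tau_1,\dots, t-\tau_{S(t)}$ sum to exactly $t$, the product of all these exponential factors is at least $\e^{-4d\eps t}$. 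Combining with the fact that the ratio factors are $\geq 1$ yields $\Phi_t \geq \e^{-4d\eps t}$ on every path, completing the argument.

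There is essentially no serious obstacle here; the statement is a clean corollary of Lemma~\ref{lem:density}. The only points that require a moment's care are: (i) checking that $\psi - \eps$ is indeed bounded away from zero so that Lemma~\ref{lem:density} is genuinely applicable (this is exactly the hypothesis $0 < \psi_\xy - \eps$); (ii) getting the direction of the inequalities right, so that the ratio factors land on the favourable side and the exponent in $\bar\varphi - \bar{\psi'}$ is nonnegative and bounded by $4d\eps$; and (iii) observing that the holding times partition $[0,t]$ so that the exponents telescope to give the overall factor $\e^{-4d\eps t}$ rather than something depending on the number of jumps $S(t)$. All of this is elementary once Lemma~\ref{lem:density} is in hand.
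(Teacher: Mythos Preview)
Your argument is correct and essentially identical to the paper's own proof: both apply Lemma~\ref{lem:density} with the pair $(\varphi,\psi-\eps)$, drop the ratio factors using $\varphi_\xy\geq\psi_\xy-\eps$, bound each exponent $\bar\varphi-\overline{\psi-\eps}$ by $4d\eps$ via $\varphi_\xy-\psi_\xy\leq\eps$ and the $2d$ neighbours per site, and telescope the holding times to get $\e^{-4d\eps t}$.
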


\begin{proof}
Let $\Phi_t$ denote the Radon-Nikodym density of $\P_0^\varphi$ with respect to $\P_0^{\psi-\eps}$ up to time $t$. Employing the representation given in Lemma~\ref{lem:density}, we have
\begin{align*}
\Phi_t&\geq\prod_{i=1}^{S(t)}\left(\e^{-(\tau_i-\tau_{i-1})\left[\bar\varphi(X_{\tau_{i-1}})-\bar\psi(X_{\tau_{i-1}})+2d\eps\right]}\right)\e^{-(t-\tau_{S(t)})\left[\bar\varphi(X_t)-\bar\psi(X_t)+2d\eps\right]}\\
&\geq\prod_{i=1}^{S(t)}\left(\e^{-(\tau_i-\tau_{i-1})4d\eps}\right)\e^{-(t-\tau_{S(t)})4d\eps}\geq\e^{-4d\eps t}.
\end{align*}
The desired inequality follows immediately.
\end{proof}
\begin{remark}\label{rem:LDP_density_estimate}
If the event $A$ is contained in $\{\supp(\ell_t)\subset B\}$, it suffices to require $0<\psi_\xy-\eps\leq\varphi_\xy\leq\psi_\xy+\eps$ for some $\eps>0$ and all $\{x,y\}\in E_B$.
\end{remark}

Let us now show (\ref{eqn:lbound}). Fix an open set $O\subset\Mcal_1(B)$. As the event $\{X_{[0,t]}\subset B\}$ is contained in $\{\frac1t\ell_t\in O\}$, we omit it in the notation. Observe that the distributions of $\frac{1}{t}\ell_t$ under $\P_0^\om$ and $\frac{1}{t^{1-r}}\ell_{t^{1-r}}$ under $\P_0^{t^r\om}$ coincide for all $0<r<1$. Hence
$$
\liminf_{t\to\infty}\frac 1{t^{\frac{\eta}{\eta+1}}}\log\Big\langle\P_0^{\om}\Big(\smfrac{1}{t}\ell_t\in O\Big)\Big\rangle
=\liminf_{t\to\infty}\frac1t\log\Big\langle\P_0^{t^\frac{1}{\eta}\om}\Big(\smfrac{1}{t}\ell_t\in O\Big)\Big\rangle,
$$
which will simplify the application of a classical Donsker-Varadhan LDP for random walks in fixed environment later. Choose an element $g^2\in O$ arbitrarily. For $M>0$ define $ \varphi^{\ssup g}_M\colon E_B\to(0,\infty)$ by
$$
\varphi^{\ssup g}_M(x,y)=\begin{cases}
(D\eta)^\frac{1}{\eta+1}|g(y)-g(x)|^{-\frac{2}{\eta+1}}&\mbox{if }|g(y)-g(x)|>0,\\
M&\mbox{otherwise.} 
\end{cases}
$$
Next, we introduce the set
\begin{equation}
A=\big\{\varphi\colon E_B\to(0,\infty)\,\big|\,\varphi^{\ssup g}_M-\eps\leq\varphi\leq \varphi^{\ssup g}_M\big\},
\end{equation}
where $\eps>0$ is picked smaller than $\frac 12\min_{E_B}\varphi^{\ssup g}_M$. By dint of Corollary \ref{cor:LDP_density_estimate},
\begin{align}\label{LDP_LB1_finite}
\Big\langle\P_0^{t^\frac{1}{\eta}\om}\Big(\smfrac 1t\ell_t\in O\Big)\Big\rangle
&\geq\Big\langle\P_0^{t^\frac{1}{\eta}\om}\Big(\smfrac 1t\ell_t\in O\Big)\1_{\big\{t^\frac{1}{\eta}\om\in A\big\}}\Big\rangle\notag\\
&\geq\inf_{\varphi\in A}\P_0^{\varphi}\Big(\smfrac 1t\ell_t\in O\Big)\Pr\big(t^\frac{1}{\eta}\om\in A)\notag\\
&\geq\e^{-4d\eps t}\P_0^{\varphi_M^{\ssup g}-\eps}\Big(\smfrac 1t\ell_t\in O\Big)\Pr\big(t^\frac{1}{\eta}\om\in A).
\end{align}
Using the tail assumption in \eqref{eqn:tails_behaviour}, we see that
$$
\lim_{t\to\infty}\frac 1t\log\Pr\big(t^\frac{1}{\eta}\om\in A)=-H(\varphi^{\ssup g}_M),
$$
where $H$ is given in \eqref{Hdef}. Furthermore, we apply the lower bound of the classical Donsker-Varadhan LDP (see \cite{DV75} or \cite{G77}) to get
$$
\liminf_{t\to\infty}\frac 1t\log \P_0^{\varphi_M^{\ssup g}-\eps}\Big(\smfrac 1t\ell_t\in O\Big)\geq -\inf_O I_{\varphi_M^{\ssup g}-\eps},
$$
where $I_\varphi$ is given in \eqref{Idef}. Hence, from \eqref{LDP_LB1_finite} we obtain
\begin{equation*}
\begin{aligned}
\liminf_{t\to\infty}\frac 1t\log\Big\langle\P_0^{t^\frac{1}{\eta}\om}\Big(\smfrac{1}{t}\ell_t\in O\Big)\Big\rangle
&\geq-4d\eps-\inf_{O}I_{\varphi_M^{\ssup g}-\eps}-H(\varphi^{\ssup g}_M)\\
&\geq-4d\eps-\inf_{O}I_{\varphi^{\ssup g}_M}-H(\varphi^{\ssup g}_M)\\
&\geq-4d\eps-I_{\varphi^{\ssup g}_M}(g^2)-H(\varphi^{\ssup g}_M),
\end{aligned}
\end{equation*}
since $I_{\varphi_M^{\ssup g}-\eps}\leq I_{\varphi^{\ssup g}_M}$ and $g^2\in O$. Now we send $\eps$ to zero and $M$ to $\infty$, to obtain 
\begin{equation*}
\liminf_{t\to\infty}\frac 1t\log\Big\langle\P_0^{t^\frac{1}{\eta}\om}\Big(\smfrac{1}{t}\ell_t\in O\Big)\Big\rangle\geq -I_{\varphi^{\ssup g}}(g^2)-H(\varphi{\ssup g})=-J(g^2),
\end{equation*}
where $\varphi^{\ssup g}=\lim_{M\to\infty}\varphi^{\ssup g}_M$ is given in \eqref{eqn:shape_order_finite}, and we used \eqref{rem:rate_function_opt}. The desired lower bound follows by passing to the infimum over all $g^2\in O$.

\subsection{Proof of the upper bound}\label{sec:proof_ub}

In this section we prove \eqref{eqn:ubound}. Let us first fix some configuration $\varphi\in(0,\infty)^E$ and start with an estimate for the probability $\P^\varphi_0(\frac{1}{t}\ell_t\in\cdot)$. This approach has actually been used by other authors before, but we provide an independent proof for the sake of completeness.

\begin{lemma}\label{lemma:ub_via_fc}
Fix an arbitrary set $A\subset\Mcal_1(B)$. Then
\begin{equation}
\P^\varphi_0\Big(\smfrac{1}{t}\ell_t\in A\Big)\leq\frac{f(0)}{\min_B f}\exp\Big\{ t\sup_{h^2\in A}\sum_{x\in B}\frac{\Delta^\varphi f(x)}{f(x)}h^2(x)\Big\}
\end{equation}
for arbitrary $f\colon\Z^d\to [0,\infty)$ with $\supp (f)= B$ and $t>0$.
\end{lemma}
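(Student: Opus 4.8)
The plan is to use the classical exponential-martingale (Feynman--Kac / $h$-transform) bound. Fix $f\colon\Z^d\to[0,\infty)$ with $\supp(f)=B$. The key observation is that, writing $V(x):=\Delta^\varphi f(x)/f(x)$ for $x\in B$, the process
$$
M_t:=\frac{f(X_t)}{f(X_0)}\exp\Big\{-\int_0^t V(X_s)\,\d s\Big\}
$$
is a mean-one martingale under $\P_0^\varphi$ on the event $\{X_{[0,t]}\subset B\}$ (where $f(X_s)>0$ and $V(X_s)$ is well defined). Indeed, a direct generator computation shows $\partial_t \me_0^\varphi[f(X_t)\e^{-\int_0^t V(X_s)\d s}] = \me_0^\varphi[(\Delta^\varphi f(X_t)-V(X_t)f(X_t))\e^{-\int_0^t V(X_s)\d s}] = 0$, using $\Delta^\varphi f(x)=V(x)f(x)$ on $B$; so $\me_0^\varphi[M_t\1_{\{X_{[0,t]}\subset B\}}]\le 1$.

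First I would note that on the event $\{\frac1t\ell_t\in A\}$ we automatically have $X_{[0,t]}\subset B$, so $\int_0^t V(X_s)\,\d s = t\sum_{x\in B}V(x)\frac1t\ell_t(x)$ with $\frac1t\ell_t(\cdot)$ a probability measure on $B$; hence $\int_0^t V(X_s)\,\d s\le t\sup_{h^2\in A}\sum_{x\in B}V(x)h^2(x)=:t\,C(A)$. Therefore, on this event,
$$
1\ge M_t\1_{\{X_{[0,t]}\subset B\}}\1_{\{\frac1t\ell_t\in A\}}\ge \frac{\min_B f}{f(0)}\,\e^{-tC(A)}\,\1_{\{\frac1t\ell_t\in A\}},
$$
using $f(X_t)\ge\min_B f$ (since $X_t\in B$) and $f(X_0)=f(0)$. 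Taking $\P_0^\varphi$-expectation and rearranging gives exactly
$\P_0^\varphi(\frac1t\ell_t\in A)\le \frac{f(0)}{\min_B f}\e^{tC(A)}$, which is the claim.

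The main obstacle — really the only point requiring care — is justifying the martingale/supermartingale property and the exchange of generator and expectation, since $f$ is allowed to vanish outside $B$ so $V$ is only defined on $B$; but because the relevant event forces $X$ to stay in $B$, one restricts attention to the stopped process $X_{t\wedge\sigma}$, $\sigma:=\inf\{s\colon X_s\notin B\}$, on which everything is bounded (conductances fixed, $B$ finite) and the computation above is rigorous. The inequality $\me_0^\varphi[M_{t\wedge\sigma}]\le 1$ then follows from Fatou/optional stopping, and this suffices since $\{\frac1t\ell_t\in A\}\subset\{\sigma>t\}$. I would spell out the generator identity $\frac{\d}{\d t}\me_0^\varphi[f(X_{t\wedge\sigma})\e^{-\int_0^{t\wedge\sigma}V(X_s)\d s}]=0$ via Dynkin's formula applied to the space-time process and the definition of $V$, then conclude as above.
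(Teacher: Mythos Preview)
Your proof is correct and follows essentially the same route as the paper: both exploit the identity $\Delta^\varphi f - Vf=0$ on $B$ with $V=\Delta^\varphi f/f$, the paper phrasing it as the Feynman--Kac representation $f(0)=\E_0^\varphi\big[\e^{-\int_0^t V(X_s)\,\d s}f(X_t)\big]$ for the stationary solution of a Cauchy problem, and you phrasing it as the martingale property of $f(X_t)\e^{-\int_0^t V(X_s)\,\d s}$ stopped at the exit time $\sigma$ from $B$. Your explicit use of the stopped process is in fact slightly cleaner, since $f$ fails to solve the Cauchy problem at sites just outside $B$ (there $\Delta^\varphi f>0$ while $Vf=0$); the paper's equality should really be read with the killed process, which is exactly what your stopping argument makes precise.
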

\begin{proof}
We consider the Cauchy problem
\begin{equation}\label{eqn:cauchy_problem}
\begin{cases}
\,\partial_t u(x,t)=\Delta^\varphi u(x,t)+V(x)u(x,t),\quad&x\in\Z^d,\,t>0,\\
\,u(x,0)=f(x),\quad&x\in\Z^d,
\end{cases} 
\end{equation}
with
$$
V=-\frac{\Delta^\varphi f}{f}\1_B.
$$
Obviously, $u(\cdot,t)\equiv f(\cdot)$ solves (\ref{eqn:cauchy_problem}). On the other hand, by the Feynman-Kac formula, any nonnegative solution $u$ satisfies
\begin{equation}\label{eqn:fc_formula}
 u(x,t)=\E_x^\varphi\Big[\e^{\int_0^tV(X_s){\rm d}s}u(X_t,t)\Big],\qquad x\in\Z^d,\,t\geq0.
\end{equation}
Therefore, we may estimate
\begin{align*}
 f(0)&=\E_0^\varphi\Big[\e^{-\int_0^t\frac{\Delta^\varphi f(X_s)}{f(X_s)}{\rm d}s}f(X_t)\Big]
\\
&\geq \E_0^\varphi\Big[\e^{-\sum_{x\in B}\frac{\Delta^\varphi f(x)}{f(x)}\ell_t(x)}f(X_t)\1_{\{\frac{1}{t}\ell_t\in A\}}\Big]
\\ 
&\geq \min_B f\,\exp\Big\{-t\sup_{h^2\in A}\sum_{x\in B}\frac{\Delta^\varphi f(x)}{f(x)}h^2(x)\Big\} \P_0^\varphi\Big(\smfrac{1}{t}\ell_t\in A\Big),
\end{align*}
which is a rearrangement of the assertion.
\end{proof}

Now fix some closed set $C\subset\Mcal_1(B)$. As a closed subset of a finite-dimensional space, $C$ is compact with respect to the Euclidean topology. We are going to apply a standard compactness argument, which is in the spirit of the proof of the upper bound in Varadhan's lemma \cite[Thm.~4.3.1]{DZ98}. The idea is to cover $C$ with certain open balls, where \lq open\rq\ refers to the Euclidean topology.

Fix $\delta >0$. For $g^2\in C$ define
$$ 
d_g= \min\big\{| g(y)- g(x)|\colon\{x,y\}\in E,\,g(x)\not=g(y)\big\}\in(0,\infty),
$$
where we recall that $g^2$ is defined on the entire $\Z^d$ and is zero outside $B$. Consider the open ball in $\Mcal_1(B)$ of radius $\delta_g:=\min\{ d_g^4, \delta\}$ centered at $g^2$. Fixing a configuration $\varphi\in(0,\infty)^E$, we can apply Lemma \ref{lemma:ub_via_fc} with $f(\cdot):= g(\cdot)+\sqrt{\delta_g}\1_B$ and obtain
\begin{align}\label{eqn:main_estimate}
\P^\varphi_0\Big(\smfrac{1}{t}\ell_t\in B_{\delta_g}(g^2)\Big)&\leq \frac{1+\sqrt{\delta_g}}{\sqrt{\delta_g}}\exp\Big\{t\sup_{h^2\in B_{\delta_g}( g^2)}\sum_{x\in B}\frac{\Delta^\varphi ( g+\sqrt{\delta_g}\1_{B})(x)}{ g(x)+\sqrt {\delta_g}}h^2(x)\Big\}.
\end{align}

In what follows, we show
\begin{align}\label{eqn:ball_estimate}
\sup_{h^2\in B_{\delta_g}(g^2)}\sum_{x\in B}\frac{\Delta^\varphi ( g+\sqrt \delta_g\1_{B})(x)}{ g(x)+\sqrt \delta_g}h^2(x)&\leq-I_\varphi(g^2)(1-7\delta^{\frac{1}{4}}),
\end{align}
where we recall from \eqref{Idef} that $I_\varphi(g^2)=\sum_{\{x,y\}\in E}\varphi_{xy}|g(x)-g(y)|^2=-(\Delta^\varphi g,g)$.
%
%
%
To that end, we replace $h^2$ by $(g+\sqrt{\delta_g}\1_B)^2$ and control the error terms.
\begin{align}\label{eqn:exponent}
\sup_{h^2\in B_{\delta_g}(g^2)}&\sum_{x\in B}\frac{\Delta^\varphi ( g+\sqrt{\delta_g}\1_{B})(x)}{ g(x)+\sqrt{\delta_g}}h^2(x)\nonumber\\
=&\sum_{x\in B}\frac{\Delta^\varphi ( g+\sqrt{\delta_g}\1_{B})(x)}{ g(x)+\sqrt{\delta_g}}(g(x)+\sqrt{\delta_g})^2 \nonumber\\
&+\sup_{h^2\in B_{\delta_g}(g^2)}\sum_{x\in B}\frac{\Delta^\varphi ( g+\sqrt{\delta_g}\1_{B})(x)}{ g(x)+\sqrt{\delta_g}}\big[(h^2(x)-g^2(x))-2\sqrt{\delta_g}g(x)-\delta_g\big].
\end{align}

The first sum is easily estimated against the standard Donsker-Varadan rate function:
$$
\begin{aligned}
\sum_{x\in B}\frac{\Delta^\varphi(g+\sqrt{\delta_g}\1_{B})(x)}{g(x)+\sqrt{\delta_g}}(g(x)+\sqrt{\delta_g})^2
&=\big(\Delta^\varphi(g+\sqrt{\delta_g}\1_{B}),g+\sqrt{\delta_g}\1_{B}\big)\\
&\leq \big(\Delta^\varphi g,g\big)=-I_\varphi(g^2),
\end{aligned}
$$
where we have used the symmetry of the operator $\Delta^\varphi$ and that $g=0$ outside $B$. In order to estimate the last term in (\ref{eqn:exponent}), we treat the contribution of every summand within the square brackets separately. We begin with the first part and observe that $|h^2(x)- g^2(x)|=|h(x)-g(x)|\,|h(x)+g(x)|\leq 2\delta_g$ for all $h^2\in B_{\delta_g}(g^2)$ and $x\in B$. Thus
\begin{align*}
\sum_{x\in B}&\frac{\Delta^\varphi ( g+\sqrt{\delta_g}\1_{B})(x)}{ g(x)+\sqrt{\delta_g}}(h^2(x)-g^2(x))\\
&=\sum_{\substack{\{x,y\}\in E\colon\\ x,y\in B}} \varphi_{xy}\frac{g(y)-g(x)}{g(x)+\sqrt{\delta_g}}(h^2(x)-g^2(x))-
\sum_{\substack{\{x,y\}\in E:\\ x\in B, y\not\in B}}\varphi_{xy}(h^2(x)-g^2(x))\\
&\leq \sum_{\substack{\{x,y\}\in E\\ x,y\in B}} \varphi_{xy} \frac{|g(x)-g(y)|}{\sqrt{\delta_g}}2\delta_g +\sum_{\substack{\{x,y\}\in E:\\ x\in B, y\not\in B}}\varphi_{xy}2\delta_g\\
&\leq 4\delta^{\frac{1}{4}}I_\varphi(g^2).
\end{align*}
The last step is due to the fact that $\delta_g^{\frac{1}{4}}\leq g(x)-g(y)$ whenever $g(x)-g(y)>0$. Secondly,
\begin{align*}
\sum_{x\in B}&\frac{\Delta^\varphi ( g+\sqrt \delta_g\1_{B})(x)}{ g(x)+\sqrt{\delta_g}}(-2\sqrt{\delta_g}g(x))\\
&\leq\sum_{\substack{\{x,y\}\in E\colon\\x,y\in B}} \varphi_{xy}|g(x)-g(y)|
\Big|\frac{2\sqrt{\delta_g}g(x)}{g(x)+{\sqrt{\delta_g}}}-\frac{2\sqrt{\delta_g}g(y)}{g(y)+\sqrt{\delta_g}}\Big|+
\sum_{\substack{\{x,y\}\in E\colon\\ x\in B,y\not\in B}}\varphi_{xy}2\sqrt{\delta_g}g(x)\\
&\leq\sum_{\substack{\{x,y\}\in E\colon\\x,y\in B}} \varphi_{xy}|g(x)-g(y)|^2 \frac{2\delta_g}{\sqrt{\delta_g}d_g}+
\sum_{\substack{\{x,y\}\in E\colon\\ x\in B, y\not\in B}}\varphi_{xy}2\sqrt{\delta_g}|g(x)-g(y)|\\
&\leq 2\delta^{\frac{1}{4}}I_\varphi(g^2).
\end{align*}
Here, we have used $\delta_g^{\frac{1}{4}}\leq d_g$. The only part left is
\begin{align*}
\sum_{x\in B}&\frac{\Delta^\varphi ( g+\sqrt{\delta_g}\1_{B})(x)}{ g(x)+\sqrt{\delta_g}}(-\delta_g)\\
&\leq\sum_{\substack{\{x,y\}\in E\colon\\x,y\in B}} \varphi_{xy}|g(x)-g(y)|
\Big|\frac{1}{g(x)+{\sqrt{\delta_g}}}-\frac{1}{g(y)+\sqrt{\delta_g}}\Big|\delta_g
+\sum_{\substack{\{x,y\}\in E\colon\\ x\in B,y\not\in B}}\varphi_{xy}\delta_g\\
&\leq\sum_{\substack{\{x,y\}\in E\colon\\x,y\in B}} \varphi_{xy}|g(x)-g(y)|^2
\frac{1}{\sqrt{\delta_g}d_g}\delta_g
+\sum_{\substack{\{x,y\}\in E\colon\\ x\in B,y\not\in B}}\varphi_{xy}\delta_g\\
&\leq \delta^{\frac{1}{4}}I_\varphi(g^2).
\end{align*}
Combining (\ref{eqn:exponent}) with the last three estimates, we obtain (\ref{eqn:ball_estimate}) and in particular
\begin{equation}\label{eqn:estifinal}
\P_0^\varphi\Big(\smfrac{1}{t}\ell_t\in B_{\delta}( g^2)\Big)\leq\frac{1+\sqrt{\delta_g}}{\sqrt{\delta_g}}\prod_{\{x,y\}\in E}\exp\big\{-t\,\varphi_{xy}|g(x)-g(y)|^2(1-7\delta^{\frac{1}{4}})\big\}.
\end{equation}
The balls $B_{\delta_g}(g^2)$ with $g^2\in C$ cover $C$ and since this set is compact, we may extract a finite subcovering of $C$. Denote by $(g^2_i)_{i=1,\dots,N}$ the centers of the balls in this subcovering. Then, applying \eqref{eqn:estifinal} for $\varphi=t^{\frac{1}{\eta}}\omega$, we obtain
\begin{align*}
\limsup_{t\to\infty}&\frac{1}{t}\log \Big\langle \P_0^{t^{\frac{1}{\eta}}\omega}\Big(\smfrac{1}{t}\ell_t\in C\Big)\Big\rangle\\ 
\leq&\max_{i=1,\dots,N}\limsup_{t\to\infty} \frac{1}{t}\log \Big\langle \P_0^{t^{\frac{1}{\eta}}\omega}\Big(\smfrac{1}{t}\ell_t\in B_{\delta_{g_i}}(g_i^2)\Big)\Big\rangle\\
\leq &\max_{i=1,\dots,N}\sum_{\{x,y\}\in E_B}\limsup_{t\to\infty} \frac{1}{t}\log\Big\langle  \exp\big\{-t^{\frac{1+\eta}{\eta}}\omega_{xy}|g_i(y)-g_i(x)|^2(1-7\delta^{\frac{1}{4}})\big\} 
\Big\rangle.
\end{align*}

According to de Bruijn's exponential Tauberian theorem \cite[Theorem  4.12.9]{BGT89}, the tail assumption (\ref{eqn:tails_behaviour}) is equivalent to the condition that, for any $M>0$ and $\{x,y\}\in E$,
\begin{equation}\label{eqn:TailEquivalent}
\lim_{t\to\infty}\frac{1}{t}\log\Big\langle \exp\big\{-t^{\frac{1+\eta}{\eta}}\omega_\xy M\big\}\Big\rangle=
-K_{\eta,D} M^{\frac{\eta}{1+\eta}},
\end{equation}
where we recall $K_{\eta,D}=\big(1+\frac{1}{\eta}\big)(D\eta)^\frac{1}{\eta+1}$ from Theorem~\ref{thm:LDP_finite}.
Thus, with $\delta$ so small that $1-7\delta^{\frac{1}{4}}>0$, we obtain
\begin{align*}
\limsup_{t\to\infty} \frac{1}{t}\log \Big\langle \P^{t^{\frac{1}{\eta}}\omega}\Big(\smfrac{1}{t}\ell_t\in C\Big)\Big\rangle
&\leq 
\max_{i=1,\dots,N}\sum_{\{x,y\}\in E_B}-K_{\eta,D} |g_i(y)-g_i(x)|^{\frac{2\eta}{1+\eta}}(1-7\delta^{\frac{1}{4}})^{\frac{\eta}{1+\eta}}
\\
&\leq -(1-7\delta^{\frac{1}{4}})^{\frac{\eta}{1+\eta}}\inf_{g^2\in C}J(g^2)
\end{align*}
with $J$ as in \eqref{rem:rate_function_opt}. Since we may choose $\delta$ arbitrarily small, the proof of \eqref{eqn:ubound} is complete.


\end{document}